\documentclass[12pt,amscd]{amsart}
\usepackage[all]{xy}
\usepackage{graphicx}
\usepackage{amsmath,amsxtra,amssymb,latexsym, amscd,amsthm}

\usepackage{tikz}
\usepackage{ytableau}

 \usepackage{indentfirst}
\usepackage[mathscr]{eucal}
  \usepackage[pagebackref=true]{hyperref}

\newtheorem{thm}{Theorem}[section]

\newtheorem{lem}[thm]{Lemma}

\theoremstyle{definition}
\newtheorem{defn}[thm]{Definition}

\numberwithin{equation}{section}

\DeclareMathOperator{\Ass}{Ass}

\begin{document}

\title{V-numbers of powers of cover ideals of unimodular hypergraphs}

\author[N.T. Hang]{Nguyen Thu Hang}
\address{Thai Nguyen University of Sciences, Phan Dinh phung Ward, Thai Nguyen, Vietnam}
\email{hangnt@tnus.edu.vn}

\author{Thanh Vu}
\address{Institute of Mathematics, VAST, 18 Hoang Quoc Viet, Hanoi, Vietnam}
\email{vuqthanh@gmail.com}

\subjclass[2020]{13F55, 05E40}
\keywords{v-number; cover ideal; symbolic power}

\date{}

\dedicatory{Dedicated to Professor Le Tuan Hoa on the occasion of his 70th birthday}
\commby{}

\begin{abstract} Let $H$ be a unimodular hypergraph with cover ideal $J(H)$. We prove that the local $v$-numbers of $J(H)^t$ are linear in $t$ for all $t\ge1$. We further show that the global $v$-number of $J(H)^t$ is linear in $t$ for all $t\ge n-1$. Finally, we prove that the global $v$-number of the powers of the cover ideal of any tree is linear in $t$ for all $t\ge1$.
\end{abstract}

\maketitle
\section{Introduction}
\label{sect_intro}
In \cite{V}, Vu proved that the local $v$-number of powers of the cover ideal of a bipartite graph is linear in $t$ for all $t\ge1$, and asked whether the global $v$-number is also linear in $t$ for all $t\ge1$. In this work, we generalize Vu's result by proving that the local $v$-number of powers of the cover ideal of any unimodular hypergraph is linear in $t$ for all $t\ge1$. We also answer Vu's question affirmatively for trees by showing that the global $v$-number of powers of the cover ideal of a tree is linear in $t$ for all $t\ge1$. The proof of the first result relies on the integer decomposition property of the covering polyhedron associated with a totally unimodular matrix. To establish the linearity of the global $v$-number for powers of cover ideals of trees, we analyze the covering numbers of subtrees obtained by deleting an edge. We now introduce the relevant notions in more detail.

A hypergraph $H$ is a pair $(V,E)$, where $V$ is a finite vertex set and $E$ is a collection of subsets of $V$, called edges. We assume throughout that no edge of $H$ contains another. Let $F_1,\ldots,F_m$ denote the edges of $H$, and let $M$ be the edge-vertex incidence matrix of $H$. We say that $H$ is a \emph{unimodular hypergraph} if $M$ is totally unimodular, that is, every square submatrix of $M$ has determinant $0$, $1$, or $-1$. Total unimodularity is a fundamental notion in combinatorial optimization and plays an important role in the study of integer linear programs \cite{S}. In particular, every bipartite graph is a unimodular hypergraph.

Let $S=\mathbf{k}[x_1,\ldots,x_n]$ be a standard graded polynomial ring over a field $\mathbf{k}$, and let $I$ be a nonzero homogeneous ideal of $S$. For an associated prime $P$ of $I$, the \emph{local $v$-number} of $I$ at $P$ is defined by
\[
v_P(I)
=
\min\left\{
d\ge0 \,\middle|\,
\text{there exists } f\in S_d \text{ such that } I:f=P
\right\}.
\]
The \emph{$v$-number} of $I$ is defined by
\[
v(I)=\min_{P\in\Ass(I)}v_P(I),
\]
where $\Ass(I)$ denotes the set of associated primes of $I$. The notion of the $v$-number of an ideal was introduced by Cooper, Seceleanu, Tohaneanu, Vaz Pinto, and Villarreal \cite{CSTVV}. Conca \cite{C} and, independently, Ficarra and Sgroi \cite{FS}, proved that the $v$-numbers of the powers of an ideal are eventually linear in the exponent.

When $H$ is a unimodular hypergraph, Herzog, Hibi, and Trung \cite{HHT} proved that
\[
J(H)^t=J(H)^{(t)}
\]
for all $t\ge1$. More recently, Chau, H\`a, Jayanthan, and Vu \cite{CHJV} showed that the $v$-numbers of symbolic powers of squarefree monomial ideals can be studied via integer programming. Combining this interpretation with the total unimodularity of the incidence matrix, we prove that the local $v$-numbers of powers of the cover ideal of a unimodular hypergraph are linear in $t$ for all $t\ge1$. Our first main result is the following.

\begin{thm}\label{thm_unimodular}
Let $H$ be a unimodular hypergraph, let $P$ be an edge of $H$, and write
\(
P=(x_j\mid j\in P)
\)
for the corresponding prime ideal of $S$. Then, for all $t\ge1$,
\[
v_P\bigl(J(H)^t\bigr)
=
v_P\bigl(J(H)\bigr)+(t-1)\gamma_P
\]
for some integer constant $\gamma_P$. Consequently,
\(
v\bigl(J(H)^t\bigr)
\)
is a linear function of $t$ for all $t\ge n-1$.
\end{thm}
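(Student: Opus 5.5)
The plan is to turn the local $v$-number into an integer program, use total unimodularity to pass to linear programming, and then compare consecutive values of $t$ through a decomposition of optimal solutions.

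\emph{Step 1 (integer program).} By \cite{HHT}, $J(H)^t=J(H)^{(t)}=\bigcap_{F\in E}P_F^t$. Following the approach of \cite{CHJV}, I will take $f$ to be a monomial $x^a$ and read off the conditions for $J(H)^{(t)}:x^a=P$. They are $x^a\notin P^t$, $x_jx^a\in P^t$ for all $j\in P$, and $x^a\in Q^t$ for every edge $Q\neq P$. Writing $a(F)=\sum_{j\in F}a_j$, this gives
\[
v_P\bigl(J(H)^t\bigr)=v_t:=\min\Bigl\{\textstyle\sum_j a_j \;\Bigm|\; a\in\mathbb Z^n_{\ge0},\ a(P)=t-1,\ a(Q)\ge t \text{ for all edges } Q\ne P\Bigr\}.
\]
The constraint matrix consists of rows of $M$, with the row of $P$ appearing with both signs, together with the identity. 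It is therefore totally unimodular and the right-hand sides are integral. Hence $v_t$ equals the value of the corresponding linear program, and every face of the feasible polyhedron has integral vertices.

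\emph{Step 2 (linearity).} Set $\gamma_P=\min\{\sum b_j \mid b\in\mathbb Z^n_{\ge0},\ b(P)=1,\ b(Q)\ge1 \text{ for } Q\ne P\}$. This set is nonempty whenever $v_2$ is finite, and in that case $\gamma_P=v_2-v_1$ will follow from the two inequalities below. The inequality $v_{t+1}\le v_t+\gamma_P$ is immediate, since the sum of an optimal $a$ for $t$ and an optimal $b$ is feasible for $t+1$.

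For the reverse inequality I take an optimal $a$ for $t+1$ and look for an integral $y$ with $0\le y\le a$ such that $y$ is feasible at level $t$ and $a-y$ is feasible for the $\gamma_P$-system. These constraints are interval constraints on the rows of the same totally unimodular matrix, namely
\[
y(P)=t-1,\qquad t\le y(Q)\le a(Q)-1,\qquad 0\le y\le a.
\]
This system again has a totally unimodular matrix, so it has an integral solution as soon as it has a real one. This is the integer decomposition property of the covering polyhedron. Iterating the decomposition then gives $v_t=v_1+(t-1)\gamma_P$.

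\emph{Main obstacle.} The hard step is exhibiting a fractional solution $y$. The naive scalings $y=\lambda a$ fail: the equality on $P$ forces $\lambda=(t-1)/t$, and then the lower bounds $y(Q)\ge t$ break whenever $a(Q)=t+1$. I would first try a convex combination of $\lambda a$ with a vertex of the level-$t$ polyhedron lying in the box $[0,a]$. If that fails, I would instead split off a level-$1$ piece: find $a'$ with $a'(P)=0$ and $a'(Q)\ge1$, and show that the remainder is $t$ times a rational point of the $\gamma_P$-polyhedron. Integer decomposition of that polyhedron, which is again totally unimodular, would then yield $t$ integral pieces.

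\emph{Global statement.} By definition $v(J(H)^t)=\min_P\bigl(v_P(J(H))+(t-1)\gamma_P\bigr)$, a minimum of finitely many lines with integer slopes and intercepts. The intercepts satisfy $0\le v_P(J(H))\le n-1$, because $a(P)=0$ and an optimal $a$ can be taken $0/1$ on the complement of $P$. If $\gamma_Q>\gamma_P$, then $\gamma_Q-\gamma_P\ge1$. Hence for $t-1\ge n-2$ the line of $Q$ is never below that of $P$. Consequently, from $t\ge n-1$ on, the minimum is attained among lines of minimal slope, and among these by the one with smallest intercept. This makes $v(J(H)^t)$ linear for all $t\ge n-1$.
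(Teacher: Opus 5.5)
Your integer-program reduction, the upper bound $v_{t+1}\le v_t+\gamma_P$, and the reformulation of the lower bound are all correct. The reformulation asks for an integral $y$ with $0\le y\le a$, $y(P)=t-1$ and $t\le y(Q)\le a(Q)-1$. But the proposal stops exactly where the theorem's content lies: you never show that this system has any solution, real or integral. Both of your fallback strategies are stated as hopes, not arguments. In particular, your second strategy needs an $a'$ with $a'(P)=0$ and $1\le a'(Q)\le a(Q)-t$ for all $Q\neq P$. Finding such an $a'$ is the missing step itself, not a reduction of it. So the lower bound $v_{t+1}\ge v_t+\gamma_P$, and with it the linearity of $v_P(J(H)^t)$, is not established.

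The missing idea is how to handle the equality on $P$. The paper adds a new vertex $n+1$ to the edge $P$. Appending this unit column keeps the incidence matrix totally unimodular, and $(a,1)$ becomes an honest $(t+1)$-cover of the new hypergraph with value exactly $t+1$ on $P\cup\{n+1\}$. By the Herzog--Hibi--Trung decomposition it splits into $t+1$ $1$-covers, each of value exactly $1$ on $P\cup\{n+1\}$. Exactly one of them uses $n+1$. Dropping that coordinate yields $t$ $P$-tight $1$-covers and one $P$-almost $1$-cover, which is the decomposition you need.

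You could also repair your own framework directly. Relax $y(P)=t-1$ to $t-1\le y(P)\le t$, and scale by $\frac{t}{t+1}$ rather than $\frac{t-1}{t}$. The point $y_0=\frac{t}{t+1}a$ satisfies $y_0(Q)\ge t$, $(a-y_0)(Q)\ge 1$ and $y_0(P)=t-1+\frac{1}{t+1}$. The relaxed polytope is bounded and, by total unimodularity, integral, so its vertices have $y(P)\in\{t-1,t\}$. Since $y_0(P)<t$, some vertex in a convex decomposition of $y_0$ has $y(P)=t-1$, and that vertex is your integral $y$.

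There is also an off-by-one in your final step. With only $v_Q(J(H))\ge 0$ and $v_P(J(H))\le n-1$, you get $v_Q(J(H)^t)-v_P(J(H)^t)\ge t-n$, which gives linearity only for $t\ge n$. To reach $t\ge n-1$ you need $v_Q(J(H))\ge 1$. This holds whenever $H$ has at least two edges, since a $Q$-almost $1$-cover must cover some other edge; the one-edge case is trivial.
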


The proof of Theorem~\ref{thm_unimodular} is given in the next section. We next address a question of Vu \cite{V} concerning the linearity of the $v$-number of powers of cover ideals of bipartite graphs. By analyzing the covering numbers of certain subtrees obtained by deleting an edge, we prove the following second main result.

\begin{thm}\label{thm_linear_trees}
Let $T$ be a tree. Then $v\bigl(J(T)^t\bigr)$ is a linear function of $t$ for all $t\ge1$.
\end{thm}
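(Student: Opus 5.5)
The plan is to reduce the statement to Theorem~\ref{thm_unimodular} together with a combinatorial comparison between edges of $T$. A tree is bipartite, hence a unimodular hypergraph, so Theorem~\ref{thm_unimodular} gives, for every edge $e=\{i,j\}$ of $T$ with prime $P_e=(x_i,x_j)$,
\[
v_{P_e}\bigl(J(T)^t\bigr)=\alpha_e+(t-1)\gamma_e,\qquad \alpha_e:=v_{P_e}\bigl(J(T)\bigr).
\]
Since $\Ass(J(T)^t)=\{P_e\}$, we have $v(J(T)^t)=\min_e\bigl(\alpha_e+(t-1)\gamma_e\bigr)$. A minimum of finitely many linear functions is linear on all of $t\ge1$ as soon as a single edge $e_0$ attains both $\min_e\alpha_e$ and $\min_e\gamma_e$. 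The whole proof therefore reduces to producing such an edge.

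\textbf{Step 1: compute $\alpha_e$ and $\gamma_e$ combinatorially.} Using $J(T)^t=J(T)^{(t)}=\bigcap_{\{u,v\}}(x_u,x_v)^t$ and the integer programming description of \cite{CHJV}, a monomial $x^a$ satisfies $J(T)^{(t)}:x^a=P_e$ exactly when
\[
a_i+a_j=t-1 \quad\text{and}\quad a_u+a_v\ge t \text{ for every other edge } \{u,v\}.
\]
Hence $\alpha_e+(t-1)\gamma_e$ is the optimum of this integer program.
\begin{itemize}
\item For $t=1$ this gives $a_i=a_j=0$. So $\alpha_e$ is the minimum size of a set $C\subseteq V\setminus\{i,j\}$ containing $(N(i)\cup N(j))\setminus\{i,j\}$ and covering $T-\{i,j\}$.
\item For the slope, I would use the integer decomposition property from the proof of Theorem~\ref{thm_unimodular}. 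The expected outcome is that $\gamma_e=\tau(T-e)$, the covering number of the forest $T-e=T_1\sqcup T_2$ obtained by deleting the edge $e$; explicitly $\gamma_e=\tau(T_1)+\tau(T_2)$.
\end{itemize}
It follows that $\gamma_e\in\{\tau(T)-1,\tau(T)\}$, because deleting one edge lowers the covering number by at most one.

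\textbf{Step 2: find a simultaneous minimizer.} By Step 1 it suffices to show two things:
\begin{enumerate}
\item some edge satisfies $\tau(T-e)=\tau(T)-1$ (call such edges \emph{critical});
\item among the edges minimizing $\alpha_e$, at least one is critical.
\end{enumerate}
Part (1) is easy: take a leaf $\ell$ with neighbour $u$, and use that some minimum cover contains $u$ but not $\ell$. If no critical edge exists at all, every $\gamma_e$ equals $\tau(T)$ and any minimizer of $\alpha_e$ works, so this case needs no further argument.

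\textbf{Main obstacle: the exchange argument for (2).} Suppose $e=\{i,j\}$ minimizes $\alpha_e$ but is not critical. I would take an optimal set $C$ for $\alpha_e$ and move to an adjacent edge $e'=\{j,k\}$ with $k\in N(j)\setminus\{i\}$, or descend toward a leaf inside $T_1$ or $T_2$. The goal is to modify $C$ into a feasible set for $e'$ of no larger size, while checking that $e'$ is critical. Non-criticality of $e$ means that every minimum cover of $T_1$ together with every minimum cover of $T_2$ already covers $e$. I would use this to show that one endpoint, say $j$, lies in some minimum cover of its component. Then I would exchange $j$ for its neighbours, using the tree structure: each component of $T-\{i,j\}$ attaches to $\{i,j\}$ by a single edge, so covers decompose componentwise.

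I expect this step to be delicate, and a case analysis on leaves and on whether $i$ or $j$ is a support vertex may be needed. If the direct exchange fails, I would instead prove by induction on $|V(T)|$, deleting a leaf and its neighbour, the more explicit statement $\min_e\alpha_e=\alpha_{e_0}$ for a carefully chosen critical pendant-type edge $e_0$.
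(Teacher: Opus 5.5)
Your reduction to finding one edge that minimizes both the constant term and the slope is sound. Your description of $\alpha_e$ (forcing $a_i=a_j=0$) is also correct; it is the paper's $f_{1,ij}$. The genuine gap is the slope in Step 1: it is not $\tau(T-e)$. By the proof of Theorem~\ref{thm_unimodular}, the slope is $\gamma_{1,P_e}$, the minimum size of a vertex cover of $T$ containing exactly one endpoint of $e$. Every such set is a vertex cover of $T$, so $\gamma_e\ge\tau(T)$ for every edge, and the value $\tau(T)-1$ never occurs. For example, take the path with edges $\{1,2\},\{2,3\},\{3,4\}$ and $e=\{1,2\}$. You would predict $v_{P_e}(J(T)^2)=\alpha_e+\tau(T-e)=2$, but the constraints $a_1+a_2=1$, $a_2+a_3\ge 2$, $a_3+a_4\ge 2$ force weight $3$. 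The slope is $2=\tau(T)$, while $\tau(T-e)=1$. So ``critical'' edges are the wrong target, and they need not exist at all. No edge of a star $K_{1,m}$ with $m\ge 2$ is critical, so your leaf argument for (1) fails. Nor is any edge critical in the double star obtained by joining the centres $u,v$ of two copies of $K_{1,2}$. Your fallback ``then every $\gamma_e=\tau(T)$'' is also false: in that double star, $\gamma_{\{u,v\}}=3>2=\tau(T)$.

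What you must actually show is that some edge minimizing $\alpha_e$ admits a minimum vertex cover of $T$ meeting it in exactly one vertex. Your Step 2 offers no mechanism for this beyond an unspecified exchange. The paper supplies one. For an edge $\{u,v\}$, let $\alpha_{u\to v}$ and $\beta_{u\to v}$ be the minimum sizes of covers of the branch $T_{u\to v}$ avoiding, respectively containing, $u$, and put $d_{u\to v}=\beta_{u\to v}-\alpha_{u\to v}$. Then $f_{1,uv}=\alpha_{u\to v}+\alpha_{v\to u}$, $\gamma_{1,uv}=f_{1,uv}+\min\{d_{u\to v},d_{v\to u}\}$, and $\tau(T)=f_{1,uv}+\min\{d_{u\to v},d_{v\to u},d_{u\to v}+d_{v\to u}\}$. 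Hence it suffices that $d_{u\to v}$ and $d_{v\to u}$ are not both negative. The recursion $d_{u\to v}=1-\sum_{w\in N_T(u)\setminus\{v\}}\max\{0,d_{w\to u}\}$ shows that $d\le 1$ always, and that $d_{u\to v}<0$ forces a neighbour $w$ with $d_{w\to u}=1$. The identity $f_{1,uw}-f_{1,uv}=d_{v\to u}-d_{w\to u}$ then shows that if also $d_{v\to u}<0$, the adjacent edge $\{u,w\}$ has a strictly smaller constant term. So every minimizer of the constant term has slope $\tau(T)$. This adjacent-edge comparison is the idea your proposal is missing.
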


\section{$v$-number of symbolic powers of cover ideals}

Throughout this paper, let $S = k[x_1, \ldots, x_n]$ be a standard graded polynomial ring over a field $k$. For a non-zero homogeneous ideal $I \subseteq S$, we denote by $\operatorname{Ass}(I)$ the set of associated primes of $I$. Several works address the asymptotic behavior of the $v$-function of symbolic powers \cite{KNS,VS}, whereas its precise behavior for small powers was partially studied in \cite{CHJV}.

\subsection{Local $v$-numbers via integer programming}

For a monomial $f$, we denote by $\deg(f)$ its total degree and, for each $i \in [n]$, by $\deg_i(f)$ the exponent of $x_i$ in $f$. For a monomial prime ideal $P$, we further define
\[
\deg_P(f) = \sum_{x_i \in P} \deg_i(f).
\]

The following result, which follows from \cite{CHJV}, enables the computation of the local $v$-numbers of symbolic powers of cover ideals of a hypergraph $H$ via integer programming.

\begin{lem}\label{lem_criterion}
Let $H$ be a hypergraph with edges $F_1, \ldots, F_m$. For an associated prime $P = (x_j \mid j \in F_i)$, we have 
\[
v_P(J(H)^{(t)}) = \min \left\{ \sum_{j=1}^n a_j \;\middle|\; \mathbf{a} \in \mathbb{N}^n, \, \sum_{j \in F_i} a_j = t-1, \, \sum_{j \in F_\ell} a_j \ge t \text{ for all } \ell \neq i \right\}.
\]
\end{lem}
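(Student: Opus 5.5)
The plan is to reduce to monomial witnesses, compute the colon ideal $J(H)^{(t)}:f$ explicitly for a monomial $f$, and read off the exponent conditions.

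\emph{Reduction to monomials.} Since $J(H)^{(t)}$ is a monomial ideal, I will use the standard fact (Jaramillo--Villarreal; also used in \cite{CHJV}) that for an associated prime $P$ of a monomial ideal $I$, the minimum in the definition of $v_P(I)$ is attained by a monomial $f$. So it suffices to determine exactly which monomials $f=x^{\mathbf a}$ satisfy $J(H)^{(t)}:f=P_i$, where $P_\ell=(x_j\mid j\in F_\ell)$. Then I will minimize $\deg f=\sum_j a_j$ over them.

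\emph{Computing the colon.} Write
\[
J(H)^{(t)}=\bigcap_{\ell=1}^m P_\ell^{t}.
\]
Colon commutes with intersection, so $J(H)^{(t)}:f=\bigcap_\ell (P_\ell^t:f)$. For a monomial prime $P_\ell$ and a monomial $f$, I claim
\[
P_\ell^t:f=P_\ell^{\,t-\deg_{P_\ell}(f)} \quad\text{if } \deg_{P_\ell}(f)<t,
\]
and $P_\ell^t:f=S$ otherwise. To see this, note that the variables outside $P_\ell$ are nonzerodivisors modulo $P_\ell^t$, so they can be discarded. A monomial $g$ then lies in $P_\ell^t:f$ iff $\deg_{P_\ell}(g)+\deg_{P_\ell}(f)\ge t$. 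Setting $d_\ell=\deg_{P_\ell}(f)=\sum_{j\in F_\ell}a_j$ and $L=\{\ell: d_\ell<t\}$, this gives
\[
J(H)^{(t)}:f=\bigcap_{\ell\in L}P_\ell^{\,t-d_\ell}.
\]

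\emph{Characterizing when the colon equals $P_i$.}
\begin{itemize}
\item[(i)] If $L=\{i\}$ and $d_i=t-1$, the intersection is exactly $P_i$. These are precisely the constraints $\sum_{j\in F_i}a_j=t-1$ and $\sum_{j\in F_\ell}a_j\ge t$ for $\ell\ne i$ in the statement.
\item[(ii)] Conversely, suppose $\bigcap_{\ell\in L}P_\ell^{t-d_\ell}=P_i$. For each $\ell\in L$, $P_i\subseteq P_\ell^{t-d_\ell}\subseteq P_\ell$, hence $F_i\subseteq F_\ell$. Since no edge contains another, $\ell=i$, so $L=\{i\}$ ($L$ is nonempty, because otherwise the colon is $S$). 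Then $P_i^{t-d_i}=P_i$ forces $t-d_i=1$, by comparing degrees of minimal generators.
\end{itemize}

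Hence the admissible monomials $f$ are exactly the $x^{\mathbf a}$ with $\mathbf a$ in the feasible set of the integer program. Taking minimal degree over them yields the formula.

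The only nonformal input is the reduction to monomial $f$. I expect this to be the main point to justify carefully. I would handle it by citing the known result, or, if needed, by a short argument: write $f$ as a sum of monomials and compare $I:f$ with the colons by its terms, using a suitable term order.
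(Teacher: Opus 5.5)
Your proposal is correct and follows the same route as the paper: characterize the monomials $f$ with $J(H)^{(t)}:f=P$ by the conditions $\deg_P(f)=t-1$ and $\deg_{Q}(f)\ge t$ for the other edge primes $Q$, then minimize the degree. The only difference is that the paper cites \cite{CHJV} (Lemma 2.3) for this characterization and leaves the reduction to monomial $f$ implicit. You instead derive the characterization directly from $J(H)^{(t)}=\bigcap_\ell P_\ell^t$ and the colon formula $P_\ell^t:f=P_\ell^{\,t-\deg_{P_\ell}(f)}$, and you state the monomial reduction explicitly.
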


\begin{proof}
By \cite[Lemma 2.3]{CHJV}, a monomial $f$ satisfies $(J(H)^{(t)} : f) = P$ if and only if $\deg_P(f) = t-1$ and $\deg_Q(f) \ge t$ for all other associated primes $Q$ of $J(H)$. The conclusion then follows directly from the definition.
\end{proof}

\subsection{Hypergraphs and their cover ideals}

Let $H$ be a hypergraph with vertex set $V(H) = \{1, \ldots, n\}$ and edges $F_1, \ldots, F_m$. The \emph{cover ideal} of $H$, denoted by $J(H)$, is defined by
\[
J(H) = \bigcap_{i=1}^m (x_j \mid j \in F_i).
\]
We denote by $\tau(H)$ the vertex covering number of $H$, i.e., the minimum cardinality of a vertex cover of $H$.

\begin{defn}
Let $H$ be a hypergraph with edges $F_1, \ldots, F_m$. A vector $\mathbf{a} \in \mathbb{N}^n$ is called a $t$-cover of $H$ if 
\[
\sum_{j \in F_i} a_j \ge t \quad \text{for all } i = 1, \ldots, m.
\]
\end{defn}

When $H$ is unimodular, we have the following result \cite[Theorem 1.1]{HHT}:

\begin{thm}\label{thm_decomp}
Let $H$ be a unimodular hypergraph and let $\mathbf{a} \in \mathbb{N}^n$ be a $t$-cover of $H$. Then there exist $t$ $1$-covers $\mathbf{a}_1, \ldots, \mathbf{a}_t$ of $H$ such that $\mathbf{a} = \mathbf{a}_1 + \cdots + \mathbf{a}_t$. 
\end{thm}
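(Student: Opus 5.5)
The plan is to argue by induction on $t$, peeling off one $1$-cover at a time by means of an integer point in a suitable polytope. For $t=1$ there is nothing to prove, since $\mathbf{a}$ is itself a $1$-cover. For $t\ge 2$ it suffices to find $\mathbf{b}\in\mathbb{N}^n$ with $\mathbf{b}\le \mathbf{a}$ componentwise such that:
\begin{itemize}
\item $\mathbf{b}$ is a $1$-cover of $H$;
\item $\mathbf{a}-\mathbf{b}$ is a $(t-1)$-cover of $H$.
\end{itemize}
Applying the induction hypothesis to $\mathbf{a}-\mathbf{b}\in\mathbb{N}^n$ then writes it as a sum of $t-1$ $1$-covers, and adding $\mathbf{b}$ finishes the argument.

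To produce $\mathbf{b}$, write $a(F_i)=\sum_{j\in F_i}a_j$ and consider the polytope
\[
Q=\Bigl\{\mathbf{x}\in\mathbb{R}^n \;\Bigm|\; 0\le x_j\le a_j \text{ for all } j,\ 1\le \sum_{j\in F_i}x_j\le a(F_i)-t+1 \text{ for all } i\Bigr\}.
\]
Any integer point of $Q$ satisfies both requirements. Indeed, $\sum_{j\in F_i}(a_j-x_j)\ge t-1$ is exactly the upper inequality, and $\mathbf{x}\le\mathbf{a}$ guarantees $\mathbf{a}-\mathbf{x}\in\mathbb{N}^n$.

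First, $Q$ is nonempty: the fractional point $\mathbf{x}=\mathbf{a}/t$ lies in $Q$. This holds because $a(F_i)\ge t$ gives $a(F_i)/t\ge 1$. It also gives $a(F_i)/t\le a(F_i)-t+1$, which is equivalent to $(t-1)(a(F_i)-t)\ge 0$. The box constraints are clear.

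Second, $Q$ is bounded, since $0\le\mathbf{x}\le\mathbf{a}$. Hence $Q$ is a nonempty polytope and has a vertex.

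The key step, which is the only nontrivial input, is that every vertex of $Q$ is integral. The constraint system has matrix $\begin{pmatrix} M\\ -M\\ I\\ -I\end{pmatrix}$ and integer right-hand side. Stacking $-M$ and the identity blocks onto the totally unimodular matrix $M$ preserves total unimodularity. This is a standard fact: a square submatrix either repeats a row up to sign, giving determinant $0$, or reduces by Laplace expansion along unit rows to $\pm$ a square submatrix of $M$. By the Hoffman–Kruskal theorem (see \cite{S}), a polyhedron defined by a totally unimodular matrix with integral right-hand side has integral vertices. Thus any vertex $\mathbf{b}$ of $Q$ lies in $\mathbb{N}^n$ and satisfies all the required conditions.

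The main obstacle is only in setting up the right polytope, with the upper bounds $a(F_i)-t+1$ ensuring the remainder stays a $(t-1)$-cover, and in checking its nonemptiness via $\mathbf{a}/t$. Once that is done, total unimodularity does the rest, and the induction closes.
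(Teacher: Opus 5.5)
Your proof is correct. The paper gives no argument of its own for this statement; it imports it from Herzog--Hibi--Trung \cite{HHT}. So you have supplied a genuinely self-contained alternative rather than reproducing the paper's route.

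Your argument is precisely the Baum--Trotter proof that a polyhedron defined by a totally unimodular matrix with integral right-hand side has the integer decomposition property. With $P=\{\mathbf{x}\in\mathbb{R}^n \mid \mathbf{x}\ge 0,\ M\mathbf{x}\ge \mathbf{1}\}$, your polytope is exactly $Q=\{\mathbf{x}\in P \mid \mathbf{a}-\mathbf{x}\in (t-1)P\}$. The point $\mathbf{a}/t$ is the standard witness that $Q$ is nonempty, and an integral vertex of $Q$ peels off one $1$-cover.

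What your route buys is independence from the external theorem: the paper would then rely only on the Cramer's-rule integrality of totally unimodular systems. Moreover, the exponent vectors of monomials in $J(H)^{(t)}$ are exactly the $t$-covers, and those in $J(H)^t$ are the vectors dominating a sum of $t$ $1$-covers. Hence your argument also yields $J(H)^t=J(H)^{(t)}$, the other fact the paper takes from \cite{HHT}. The citation, by contrast, is shorter and places the statement in its natural context, since it is equivalent to the vertex cover algebra of $H$ being standard graded.

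One small point of precision in your total unimodularity check: a row of $\pm I$ can restrict to a zero row of the chosen square submatrix. That is a third case besides the two you list, though it trivially gives determinant $0$.
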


\begin{defn}
Let $H$ be a hypergraph with edges $F_1, \ldots, F_m$. Fix an edge $P = F_i$ in $H$. A vector $\mathbf{a} \in \mathbb{N}^n$ is called a $P$-almost $t$-cover of $H$ if 
\[
\sum_{j \in F_i} a_j = t-1 \quad \text{and} \quad \sum_{j \in F_\ell} a_j \ge t \quad \text{for all } \ell \neq i.
\]
\end{defn}

We now prove the following decomposition lemma for $P$-almost $t$-covers of unimodular hypergraphs.

\begin{lem}\label{lem_decomp1}
Let $H$ be a unimodular hypergraph, and let $P = F_i$ be an edge of $H$. Let $\mathbf{c} \in \mathbb{N}^n$ be a $P$-almost $(t+1)$-cover of $H$. Then there exist a $1$-cover $\mathbf{b}$ and a $P$-almost $t$-cover $\mathbf{a}$ of $H$ such that $\mathbf{c} = \mathbf{a} + \mathbf{b}$.
\end{lem}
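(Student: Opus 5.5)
The plan is to choose the $1$-cover $\mathbf{b}$ directly as an integer point of a suitable polyhedron, using total unimodularity of $M$, and then to set $\mathbf{a}=\mathbf{c}-\mathbf{b}$. The hypotheses say that $\sum_{j\in P}c_j=t$ and $c(F_\ell):=\sum_{j\in F_\ell}c_j\ge t+1$ for all $\ell\ne i$. For $\mathbf{a}$ to be a $P$-almost $t$-cover and $\mathbf{b}$ a $1$-cover, it suffices to find $\mathbf{x}=\mathbf{b}\in\mathbb{Z}^n$ with
\[
\mathbf{0}\le \mathbf{x}\le \mathbf{c},\qquad \sum_{j\in P}x_j=1,\qquad 1\le \sum_{j\in F_\ell}x_j\le c(F_\ell)-t \quad(\ell\neq i).
\]
Indeed, $\mathbf{x}\le\mathbf{c}$ gives $\mathbf{a}\in\mathbb{N}^n$. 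Then $\sum_{j\in P}a_j=t-1$, and $\sum_{j\in F_\ell}a_j\ge t$ for $\ell\ne i$.

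The constraint matrix of this system is built from the rows of $M$, their negatives, and $\pm$ identity rows. This matrix is again totally unimodular, and all right-hand sides are integers. By the Hoffman--Kruskal theorem, every vertex of the polyhedron $Q$ defined by these inequalities is integral. Since $Q$ is bounded ($\mathbf{0}\le\mathbf{x}\le\mathbf{c}$), it suffices to show $Q\neq\emptyset$ over $\mathbb{R}$.

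Finding a real point of $Q$ is the step I expect to be the main obstacle. The natural candidates each fail one constraint. The point $\mathbf{c}/(t+1)$ satisfies every edge inequality, since $c(F_\ell)/(t+1)\le c(F_\ell)-t$ iff $c(F_\ell)\ge t+1$, but it has $P$-sum $t/(t+1)<1$. The point $\mathbf{c}/t$ has $P$-sum $1$, but it can violate the upper bounds on edges $F_\ell$ with $c(F_\ell)=t+1$.

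I would first try a more careful fractional construction. Pick $j_0\in P$ with $c_{j_0}\ge 1$. Apply Theorem~\ref{thm_decomp} to the $(t+1)$-cover $\mathbf{c}+\mathbf{e}_{j_0}$ to write $\mathbf{c}+\mathbf{e}_{j_0}=\mathbf{d}_1+\cdots+\mathbf{d}_{t+1}$ with $1$-covers $\mathbf{d}_k$. Because the $P$-sum of the left side is $t+1$, each $\mathbf{d}_k$ has $P$-sum exactly $1$, and each $\mathbf{d}_k$ meets every edge. Let $k_0$ be an index with $(\mathbf{d}_{k_0})_{j_0}\ge1$. I would then take $\mathbf{b}=\mathbf{d}_k$ for a well-chosen $k\ne k_0$, or an average of such $\mathbf{d}_k$ if a single choice does not work. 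This gives
\[
\mathbf{a}=(\mathbf{d}_{k_0}-\mathbf{e}_{j_0})+\sum_{m\ne k,k_0}\mathbf{d}_m ,
\]
whose $P$-sum is $t-1$. The only possible failure of $\sum_{F_\ell}a_j\ge t$ occurs for edges $F_\ell\ni j_0$ where $\mathbf{d}_{k_0}(F_\ell)=1$. On such an edge, $c(F_\ell)\ge t+1$ forces some other $\mathbf{d}_m$ to have $\mathbf{d}_m(F_\ell)\ge2$. This is exactly the slack needed.

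Averaging over the admissible choices of $k$ should produce a real point of $Q$. If the averaging does not close directly, I would instead apply Farkas' lemma to $Q$. A certificate of infeasibility would be a nonnegative combination of the edge constraints. Using $c(F_\ell)\ge t+1$, $c(P)=t$, and the decomposition of Theorem~\ref{thm_decomp}, I would try to show that such a combination contradicts the existence of the $\mathbf{d}_k$. Once $Q\ne\emptyset$, integrality yields $\mathbf{b}$, and $\mathbf{a}=\mathbf{c}-\mathbf{b}$ finishes the proof.
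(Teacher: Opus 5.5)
\textbf{There is a genuine gap: you never show that $Q\neq\emptyset$, and that is the whole content of the lemma.}

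Your reduction itself is sound. The constraint matrix of $Q$ is built from $\pm M$ and $\pm I$, so it is totally unimodular. $Q$ is bounded, so if it is nonempty it has an integral vertex. The trouble is the construction you propose for a point of $Q$. You place the extra unit on $j_0\in P$, but $j_0$ also lies in other edges. Stripping $\mathbf{e}_{j_0}$ from $\mathbf{d}_{k_0}$ can therefore uncover those edges. Since Theorem~\ref{thm_decomp} only guarantees \emph{some} decomposition, neither a choice of $k$ nor averaging can repair this.

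Here is a concrete counterexample. Take the star with center $2$ and leaves $1,3,4,\dots,t+3$; it is bipartite, hence unimodular. Let $P=\{1,2\}$ and $\mathbf{c}=t\mathbf{e}_2+\sum_{\ell=3}^{t+3}\mathbf{e}_\ell$, which is a $P$-almost $(t+1)$-cover. Then $j_0=2$, and
\[
\mathbf{c}+\mathbf{e}_2=\sum_{m=1}^{t+1}(\mathbf{e}_2+\mathbf{e}_{m+2})
\]
is a legitimate output of Theorem~\ref{thm_decomp}. In this instance $Q=\{\mathbf{e}_2\}$. However, every $\mathbf{d}_k$ has some leaf coordinate equal to $1$. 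So for every $k_0$, every $\mathbf{d}_k$ with $k\neq k_0$, and every convex combination of them, violates $x_2+x_\ell\le c(\{2,\ell\})-t=1$ on some edge $\{2,\ell\}$. The Farkas fallback names no certificate and no argument, so it does not close the gap either.

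\textbf{The missing idea is to put the extra unit on a new vertex that lies only in $P$.} This is what the paper does.

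It adjoins a vertex $n+1$ to $F_i$ alone. The incidence matrix becomes $M$ with a unit column appended, which is still totally unimodular. Now $(\mathbf{c},1)$ is an honest $(t+1)$-cover of the enlarged hypergraph $\Delta$. Decompose it by Theorem~\ref{thm_decomp}:
\begin{itemize}
\item Every summand has sum exactly $1$ on $F_i\cup\{n+1\}$.
\item Exactly one summand contains $n+1$.
\item The restriction of that summand to $V(H)$ is a $P$-almost $1$-cover of $H$. This is because $n+1$ lies in no other edge, so deleting it changes only the $P$-sum.
\end{itemize}
Take $\mathbf{b}$ to be any other summand, which exists since $t\ge 1$, and let $\mathbf{a}$ be the sum of the remaining $t$ summands. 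Then $\mathbf{a}$ has $P$-sum $t-1$ and every other edge sum at least $t$. This proves the lemma directly, and your LP and Hoffman--Kruskal step become unnecessary.
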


\begin{proof}
We consider the hypergraph $\Delta$ with vertex set $V(\Delta) = V(H) \cup \{n+1\}$ and edge set $E(\Delta) = \{F_1, \ldots, F_i', \dots, F_m\}$, where $F_i' = F_i \cup \{n+1\}$. The hypergraph $\Delta$ is unimodular because its incidence matrix $M_\Delta$ is obtained from $M_H$ by appending a standard basis column vector $e_{i}$ containing a single $1$ at the $i$-th position and $0$s elsewhere. Every square submatrix of $M_\Delta$ is either a submatrix of $M_H$ or, after permuting rows and columns if necessary, a block upper triangular matrix consisting of a square submatrix of $M_H$ and a $1 \times 1$ block containing $1$. Thus, $M_\Delta$ remains totally unimodular.

Now, let $\mathbf{d} \in \mathbb{N}^{n+1}$ be the vector $(\mathbf{c}, 1)$. Then $\mathbf{d}$ is a $(t+1)$-cover of $\Delta$. By Theorem~\ref{thm_decomp}, there exist $t+1$ $1$-covers $\mathbf{d}_1, \ldots, \mathbf{d}_{t+1}$ of $\Delta$ such that $\mathbf{d} = \mathbf{d}_1 + \cdots + \mathbf{d}_{t+1}$. 

Since $\sum_{j \in F_i'} d_j = t+1$ and each $\mathbf{d}_\ell$ is a $1$-cover of $\Delta$, we must have $\sum_{j \in F_i'} (\mathbf{d}_\ell)_j = 1$ for all $\ell = 1, \ldots, t+1$. Because $d_{n+1} = 1$, exactly one vector $\mathbf{d}_\ell$ has its $(n+1)$-th coordinate equal to $1$, while all other $\mathbf{d}_\ell$ have $0$ as their $(n+1)$-th coordinate. Without loss of generality, assume that $(\mathbf{d}_{t+1})_{n+1} = 1$.

For each $\ell = 1, \ldots, t+1$, let $\mathbf{a}_\ell \in \mathbb{N}^n$ be the restriction of $\mathbf{d}_\ell$ obtained by dropping the $(n+1)$-th coordinate. Then $\mathbf{a}_1, \ldots, \mathbf{a}_t$ are $1$-covers of $H$, and $\mathbf{a}_{t+1}$ is a $P$-almost $1$-cover of $H$. Setting $\mathbf{b} = \mathbf{a}_1$ and $\mathbf{a} = \mathbf{a}_2 + \cdots + \mathbf{a}_{t+1}$, we obtain the desired decomposition $\mathbf{c} = \mathbf{a} + \mathbf{b}$.
\end{proof}

\begin{defn}
Let $H$ be a hypergraph with edges $F_1, \ldots, F_m$. Fix an edge $P = F_i$ in $H$. A vector $\mathbf{a} \in \mathbb{N}^n$ is called a $P$-tight $t$-cover of $H$ if 
\[
\sum_{j \in F_i} a_j = t \quad \text{and} \quad \sum_{j \in F_\ell} a_j \ge t \quad \text{for all } \ell \neq i.
\]
We define
\[
\gamma_{t,P} = \min \{ |\mathbf{a}| \mid \mathbf{a} \text{ is a } P\text{-tight } t\text{-cover of } H \}.
\]
\end{defn}

\begin{proof}[Proof of Theorem~\ref{thm_unimodular}]
Since $H$ is a unimodular hypergraph, by \cite{HHT} we have $J(H)^t = J(H)^{(t)}$ for all $t \ge 1$. Let $\mathbf{b}$ be a $P$-tight $1$-cover of $H$ such that $|\mathbf{b}| = \gamma_{1,P}$. By Lemma~\ref{lem_criterion}, there exists a $P$-almost $t$-cover $\mathbf{a}$ of $H$ such that $v_P(J(H)^t) = |\mathbf{a}|$. The sum $\mathbf{a} + \mathbf{b}$ is then a $P$-almost $(t+1)$-cover of $H$. Thus, by Lemma~\ref{lem_criterion},
\[
v_P(J(H)^{t+1}) \le |\mathbf{a}| + |\mathbf{b}| = v_P(J(H)^t) + \gamma_{1,P}.
\]

Conversely, let $\mathbf{c}$ be a $P$-almost $(t+1)$-cover of $H$ such that $|\mathbf{c}| = v_P(J(H)^{t+1})$. By Lemma~\ref{lem_decomp1}, we can write $\mathbf{c} = \mathbf{a} + \mathbf{b}$, where $\mathbf{b}$ is a $P$-tight $1$-cover of $H$ and $\mathbf{a}$ is a $P$-almost $t$-cover of $H$. By Lemma~\ref{lem_criterion},
\[
v_P(J(H)^{t+1}) = |\mathbf{a}| + |\mathbf{b}| \ge v_P(J(H)^t) + \gamma_{1,P}.
\]
Hence, $v_P(J(H)^{t+1}) - v_P(J(H)^t) = \gamma_{1,P}$ for all $t \ge 1$.

For each prime ideal $P$ corresponding to an edge of $H$, set $f_P(t) = v_P(J(H)^t)$. Note that $\tau(H) = \min_{P} \gamma_{1,P}$. Let $Q$ be an associated prime ideal such that $\gamma_{1,Q} = \tau(H)$ and $f_Q(1)$ is as small as possible among all such choices of $Q$.

Now, for any other associated prime $P$ where $\gamma_{1,P} > \gamma_{1,Q}$, we have
\[
f_P(t) - f_Q(t) = (f_P(1) - f_Q(1)) + (t-1)(\gamma_{1,P} - \gamma_{1,Q}).
\]
For the edge $Q$, set $a_j = 0$ for all $j \in Q$ and $a_j = 1$ for all $j \notin Q$. Then $\mathbf{a}$ is a $Q$-almost $1$-cover of $H$. Hence, $f_Q(1) \le |\mathbf{a}| \le n-1$. Furthermore, since $f_P(1) \ge 1$, we obtain $f_P(1) - f_Q(1) \ge 1 - (n-1) = 2 - n$. Since $\gamma_{1,P} - \gamma_{1,Q} \ge 1$, this difference is non-negative for all $t \ge n-1$. Consequently, for all $t \ge n-1$, $v(J(H)^t) = f_Q(t)$. The conclusion follows.
\end{proof}

\section{Global $v$-number of powers of cover ideals of trees}
In this section, we prove that the $v$-number of powers of cover ideals of trees is linear for all $t \ge 1$. We first recall the following standard definitions.

\begin{defn}\label{definition1}
Let $G$ be a simple graph with vertex set $V(G) = \{1, \ldots, n\}$ and edge set $E(G)$.
\begin{enumerate}
    \item A simple graph $H$ is a \emph{subgraph} of $G$ if $V(H) \subseteq V(G)$ and $E(H) \subseteq E(G)$. It is an \emph{induced subgraph} of $G$ if $E(H) = \{ \{u, v\} \in E(G) \mid u, v \in V(H) \}$.
    
    \item For a subset $U \subseteq V(G)$, we denote by $G[U]$ and $G \setminus U$ the induced subgraphs of $G$ with vertex sets $U$ and $V(G) \setminus U$, respectively.
    
    \item For a vertex $v \in V(G)$, we denote by $N_G(v) = \{u \in V(G) \mid \{u, v\} \in E(G)\}$ the set of neighbors of $v$. For a subset $U \subseteq V(G)$, $N_G(U) = \bigcup_{u \in U} N_G(u)$ denotes the open neighborhood of $U$.
    
    \item A \emph{cycle} $C_n$ on $n$ vertices is a graph with vertex set $V(C_n) = \{1, \ldots, n\}$ and edge set
    \[
    E(C_n) = \{ \{1,2\}, \{2,3\}, \ldots, \{n-1,n\}, \{1,n\} \}.
    \]
    
    \item A \emph{forest} is an acyclic graph. A \emph{tree} is a connected forest.
\end{enumerate}
\end{defn}
By convention, we set $\tau(G) = 0$ if $G$ has no edges. Let $T$ be a tree. By \cite[Theorem 1.3]{V}, to establish the linearity of the global $v$-number of powers $J(T)^t$ for all $t \ge 1$, it suffices to find an edge whose corresponding prime ideal minimizes both the constant term and the leading coefficient of the local $v$-function. Since the leading coefficient of the global linear function coincides with the covering number $\tau(T)$, we analyze this relationship by decomposing $T$ along its edges.

For each edge $\{u,v\}$ of $T$, removing $\{u,v\}$ disconnects $T$ into exactly two connected components: $T_{u \to v}$ containing $u$, and $T_{v \to u}$ containing $v$. The conditions for $P$-tight and $P$-almost covers reduce to covering problems on these two subtrees, allowing us to express the local $v$-number invariants in terms of subtree data and prove global linearity.

\begin{lem}\label{lem:2}
Let $T$ be a tree, and let $\{u,v\}$ be an edge of $T$. Then
\[
T_{u\to v}-u
=
\bigcup_{w\in N_T(u)\setminus\{v\}} T_{w\to u}.
\]
\end{lem}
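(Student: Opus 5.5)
We prove the equality of graphs by showing that both sides have the same vertex set and the same edge set. Both sides are induced subgraphs of $T$. The left side is the induced subgraph on $V(T_{u\to v})\setminus\{u\}$, and each $T_{w\to u}$ is a connected component of $T$ minus an edge, which is also induced. So it suffices to check two things: the vertex sets agree, and the union on the right is an induced subgraph, i.e.\ no edge of $T_{u\to v}-u$ joins two different pieces $T_{w\to u}$ and $T_{w'\to u}$.

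For the vertices, we use the characterization that $x\in V(T_{u\to v})$ if and only if the unique path in $T$ from $x$ to $u$ avoids the edge $\{u,v\}$. Equivalently, $x\neq v$ and the path from $x$ to $u$ does not pass through $v$.

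Let $x\in V(T_{u\to v})$ with $x\neq u$. The unique $x$--$u$ path ends with an edge $\{w,u\}$ for some $w\in N_T(u)$, and $w\neq v$ because the path avoids $\{u,v\}$. Its subpath from $x$ to $w$ does not use the edge $\{w,u\}$, since $u$ occurs only at the end of the path. Hence $x\in V(T_{w\to u})$.

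Conversely, let $x\in V(T_{w\to u})$ with $w\in N_T(u)\setminus\{v\}$. The $x$--$w$ path avoids $\{w,u\}$, and therefore avoids $u$: if it passed through $u$, then appending the edge $\{w,u\}$ would create a cycle. Appending $\{w,u\}$ then gives the $x$--$u$ path, which avoids $\{u,v\}$. So $x\in V(T_{u\to v})$ and $x\neq u$.

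For the edges, suppose $\{x,y\}$ is an edge with $x\in T_{w\to u}$ and $y\in T_{w'\to u}$, where $w\neq w'$. Then the paths $x\to w\to u$ and $y\to w'\to u$ together with the edge $\{x,y\}$ close up a cycle through $u$. This contradicts acyclicity. The same path concatenation shows that the pieces $T_{w\to u}$ are pairwise vertex-disjoint: a common vertex would again give two distinct paths to $u$, through $w$ and through $w'$.

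The only step requiring care is this repeated use of the uniqueness of paths in a tree. Once the path characterization of $T_{a\to b}$ is fixed, the argument is routine, and we expect no real obstacle.
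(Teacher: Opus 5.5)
Your proof is correct and uses the same idea as the paper: every vertex of $T_{u\to v}$ other than $u$ reaches $u$ through a unique neighbor $w\neq v$, so deleting $u$ leaves exactly the pieces $T_{w\to u}$. You spell out in full, via uniqueness of paths, the vertex-set equality and the absence of edges between different pieces; the paper compresses these into the single assertion that the components of $T_{u\to v}-u$ are precisely the $T_{w\to u}$.
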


\begin{proof}
After removing the edge $\{u,v\}$, the component $T_{u\to v}$ contains
$u$ and every neighbor of $u$ other than $v$. Since $T$ is a tree, every
vertex of $T_{u\to v}\setminus\{u\}$ is connected to $u$ through a
unique neighbor $w\in N_T(u)\setminus\{v\}$. Thus, after removing $u$
from $T_{u\to v}$, the resulting connected components are precisely
the subtrees $T_{w\to u}$ for
$w\in N_T(u)\setminus\{v\}$. The conclusion follows.
\end{proof}

\begin{defn}
Let $T$ be a tree and let $\{u,v\}$ be an edge of $T$. We define 
\[
\alpha_{u \to v} = \min \{ |C| \mid C \text{ is a vertex cover of } T_{u \to v} \text{ with } u \notin C \},
\]
\[
\beta_{u \to v} = \min \{ |C| \mid C \text{ is a vertex cover of } T_{u \to v} \text{ with } u \in C \},
\]
and $d_{u \to v} = \beta_{u \to v} - \alpha_{u \to v}$.
\end{defn}

\begin{lem} \label{lem:3} 
Let $T$ be a tree and let $\{u,v\}$ be an edge of $T$. Then 
\[
d_{u \to v} = 1 - \sum_{w \in N_T(u) \setminus \{v\}} \max \{ 0, d_{w \to u} \}.
\]
\end{lem}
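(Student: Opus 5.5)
We compute $\alpha_{u\to v}$ and $\beta_{u\to v}$ separately and subtract, using the decomposition of Lemma~\ref{lem:2}. Write $W = N_T(u)\setminus\{v\}$. By Lemma~\ref{lem:2}, the edges of $T_{u\to v}$ split into three groups: the edges $\{u,w\}$ with $w\in W$, and the edges of the pairwise vertex-disjoint subtrees $T_{w\to u}$ for $w \in W$. A vertex cover $C$ of $T_{u\to v}$ is therefore determined by whether $u\in C$ together with the restrictions $C\cap V(T_{w\to u})$. Each restriction is a vertex cover of $T_{w\to u}$, and the only interaction between them is through the edges $\{u,w\}$.

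First I would handle the case $u\in C$. The edges $\{u,w\}$ are then already covered, so each restriction $C\cap V(T_{w\to u})$ is an arbitrary vertex cover of $T_{w\to u}$, and the choices for different $w$ are independent. Minimizing each one separately gives
\[
\beta_{u\to v} = 1 + \sum_{w\in W} \min\{\alpha_{w\to u},\beta_{w\to u}\}.
\]

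Next I would handle the case $u\notin C$. Every edge $\{u,w\}$ forces $w\in C$, so each restriction must be a vertex cover of $T_{w\to u}$ containing $w$. These choices are again independent, and minimizing each gives
\[
\alpha_{u\to v} = \sum_{w\in W}\beta_{w\to u}.
\]

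Subtracting, and using $\beta_{w\to u}-\min\{\alpha_{w\to u},\beta_{w\to u}\} = \max\{0, \beta_{w\to u}-\alpha_{w\to u}\} = \max\{0,d_{w\to u}\}$ termwise, yields the claimed formula. When $W=\emptyset$, $T_{u\to v}$ is the single vertex $u$, so $\alpha_{u\to v}=0$ and $\beta_{u\to v}=1$; this agrees with the formula because the empty sum is $0$.

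The only point needing care is justifying the independence in each case: the subtrees $T_{w\to u}$ are vertex-disjoint, no edge joins two of them, and the only edges outside them are the edges $\{u,w\}$. This is exactly the content of Lemma~\ref{lem:2} together with the fact that $T$ is a tree. Given that, both minimizations split as sums over $w\in W$, and no real obstacle remains.
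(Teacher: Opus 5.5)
Your proposal is correct and follows the paper's proof: you derive $\alpha_{u\to v}=\sum_{w}\beta_{w\to u}$ and $\beta_{u\to v}=1+\sum_{w}\min\{\alpha_{w\to u},\beta_{w\to u}\}$ from the decomposition in Lemma~\ref{lem:2}, then subtract term by term. You are also more explicit than the paper in justifying why the minimizations split independently and in checking the leaf case $N_T(u)\setminus\{v\}=\emptyset$.
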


\begin{proof} 
Let $C$ be a vertex cover of $T_{u \to v}$ with $u \notin C$. Then $w \in C$ for all $w \in N_T(u) \setminus \{v\}$. By Lemma~\ref{lem:2}, we deduce that 
\[
\alpha_{u \to v} = \sum_{w \in N_T(u) \setminus \{v\}} \beta_{w \to u}.
\]

Now assume that $C$ is a vertex cover of $T_{u \to v}$ with $u \in C$. By Lemma~\ref{lem:2}, $C$ must cover $T_{w \to u}$ for each $w \in N_T(u) \setminus \{v\}$. Hence, 
\[
\beta_{u \to v} = 1 + \sum_{w \in N_T(u) \setminus \{v\}} \min \{ \alpha_{w \to u}, \beta_{w \to u} \}.
\]
Therefore, 
\begin{align*}
d_{u \to v} &= \beta_{u \to v} - \alpha_{u \to v} \\
&= 1 + \sum_{w \in N_T(u) \setminus \{v\}} (\min \{ \alpha_{w \to u}, \beta_{w \to u} \} - \beta_{w \to u}) \\
&= 1 - \sum_{w \in N_T(u) \setminus \{v\}} \max \{ 0, d_{w \to u} \}.
\end{align*}
The conclusion follows.
\end{proof}

\begin{defn}
Let $G$ be a simple graph with cover ideal $J(G)$. For an edge $\{u,v\}$ of $G$ with corresponding prime ideal $P = (x_u, x_v)$ of $S$, we set 
\[
f_{1,uv} = v_P(J(G)) \quad \text{and} \quad \gamma_{1,uv} = \gamma_{1,P}.
\]
\end{defn}

\begin{lem}\label{lem_tau} 
Let $G$ be a simple graph. Then 
\[
\tau(G) = \min_{P} \gamma_{1,P},
\]
where $P$ ranges over all prime ideals $P = (x_u, x_v)$ corresponding to edges $\{u,v\} \in E(G)$.
\end{lem}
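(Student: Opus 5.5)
We prove the equality by establishing the two inequalities separately, working directly from the definitions. Recall that $\gamma_{1,P}$ is the minimum of $|\mathbf{a}|$ over all $P$-tight $1$-covers $\mathbf{a}$ of $G$. These are the vectors $\mathbf{a}\in\mathbb{N}^n$ with $a_u+a_v=1$ and $a_x+a_y\ge 1$ for every other edge $\{x,y\}$. We assume throughout that $G$ has at least one edge; otherwise the minimum is over the empty set and the statement is vacuous, with the convention $\tau(G)=0$.

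\emph{Lower bound $\tau(G)\le\gamma_{1,P}$ for every $P$.} Let $\mathbf{a}$ be a $P$-tight $1$-cover with $|\mathbf{a}|=\gamma_{1,P}$, and let $C=\{j \mid a_j\ge 1\}$ be its support. For every edge $\{x,y\}$ of $G$ (including $\{u,v\}$ itself) we have $a_x+a_y\ge1$, so $C$ meets that edge. Hence $C$ is a vertex cover of $G$, and
\[
\tau(G)\le |C|\le |\mathbf{a}|=\gamma_{1,P}.
\]

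\emph{Upper bound: some $P$ attains $\tau(G)$.} Let $C$ be a minimum vertex cover, so $|C|=\tau(G)$, and let $\mathbf{a}$ be its indicator vector. Then $a_x+a_y\ge 1$ for every edge, so it remains to find an edge $\{u,v\}$ with $a_u+a_v=1$, that is, an edge with exactly one endpoint in $C$. Suppose instead that every edge has both endpoints in $C$. Choose any $c\in C$. Each edge through $c$ has its other endpoint in $C$, hence in $C\setminus\{c\}$, and every other edge meets $C\setminus\{c\}$ as well. Thus $C\setminus\{c\}$ is a smaller vertex cover, contradicting minimality. (Alternatively, a minimal cover cannot contain both endpoints of every edge, since $C$ nonempty gives an edge with an endpoint $c\in C$, and the argument above applies.) So such an edge $P=(x_u,x_v)$ exists, $\mathbf{a}$ is a $P$-tight $1$-cover, and $\gamma_{1,P}\le |\mathbf{a}|=\tau(G)$.

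Together the two bounds give $\tau(G)=\min_P\gamma_{1,P}$. The only nontrivial point is the existence, in the upper bound, of an edge that a minimum cover meets exactly once; the minimality argument above handles it.
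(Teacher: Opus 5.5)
Your proposal is correct and follows essentially the same route as the paper. The paper gets $\tau(G)\le\gamma_{1,P}$ directly from the fact that a $P$-tight $1$-cover is a $1$-cover, which you make explicit via the support. For the reverse inequality the paper takes a minimum-weight $1$-cover $\mathbf{a}$ and notes that if every edge had $a_u+a_v\ge 2$ one could decrement a nonzero entry; your removal of a vertex $c$ from a minimum vertex cover is the same argument for $0/1$ vectors.
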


\begin{proof}
Since $\gamma_{1,P}$ is the minimum weight of a $P$-tight $1$-cover of $G$, we clearly have $\tau(G) \le \gamma_{1,P}$ for all such $P$. 

Conversely, let $\mathbf{a}$ be a $1$-cover of $G$ of minimum weight $\tau(G)$. Then $\mathbf{a}$ must be $P$-tight for at least one prime ideal $P = (x_u, x_v)$ associated with an edge $\{u,v\} \in E(G)$. Indeed, if $a_u + a_v \ge 2$ for every edge $\{u,v\}$, then decrementing any nonzero entry of $\mathbf{a}$ by $1$ would yield a valid $1$-cover of strictly smaller weight, contradicting the minimality of $\tau(G)$. Hence, $\tau(G) \ge \gamma_{1,P}$ for some $P$, completing the proof.
\end{proof}

We now analyze $f_{1,uv}$ and $\gamma_{1,uv}$ in terms of invariants of the subtrees $T_{u \to v}$ and $T_{v \to u}$.

\begin{lem}\label{lem_4} 
Let $T$ be a tree and let $\{u,v\}$ be an edge of $T$. Then 
\[
f_{1,uv} = \alpha_{u \to v} + \alpha_{v \to u}.
\]
\end{lem}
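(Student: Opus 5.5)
By Lemma~\ref{lem_criterion} with $t=1$, the invariant $f_{1,uv}$ is the minimum of $\sum_j a_j$ over all $\mathbf{a}\in\mathbb{N}^n$ with two properties: $a_u+a_v=0$, and $a_x+a_y\ge 1$ for every edge $\{x,y\}\ne\{u,v\}$ of $T$. The first condition forces $a_u=a_v=0$. So the plan is to prove the two inequalities $f_{1,uv}\ge \alpha_{u\to v}+\alpha_{v\to u}$ and $f_{1,uv}\le \alpha_{u\to v}+\alpha_{v\to u}$ separately, using the decomposition of $V(T)$ into $V(T_{u\to v})\sqcup V(T_{v\to u})$. This decomposition holds because removing $\{u,v\}$ from the tree leaves exactly these two components. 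Every edge of $T$ other than $\{u,v\}$ therefore lies entirely in one of the two subtrees.

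For the lower bound, take an optimal $\mathbf{a}$ and let $C=\{j\mid a_j\ge1\}$. Then $|C|\le|\mathbf{a}|$, and $u,v\notin C$. Moreover, $C$ meets every edge other than $\{u,v\}$. Hence $C\cap V(T_{u\to v})$ is a vertex cover of $T_{u\to v}$ avoiding $u$, and $C\cap V(T_{v\to u})$ is a vertex cover of $T_{v\to u}$ avoiding $v$. Since the two vertex sets are disjoint, this gives $|\mathbf{a}|\ge|C|\ge \alpha_{u\to v}+\alpha_{v\to u}$.

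For the upper bound, choose vertex covers $C_1$ of $T_{u\to v}$ with $u\notin C_1$ and $|C_1|=\alpha_{u\to v}$, and $C_2$ of $T_{v\to u}$ with $v\notin C_2$ and $|C_2|=\alpha_{v\to u}$. Let $\mathbf{a}$ be the indicator vector of $C_1\cup C_2$. Then $a_u=a_v=0$. Every edge other than $\{u,v\}$ lies in one of the two subtrees, so it is covered by $C_1$ or by $C_2$. Thus $\mathbf{a}$ is feasible for Lemma~\ref{lem_criterion} with $t=1$, and $f_{1,uv}\le|C_1|+|C_2|$.

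One degenerate case needs a remark: if $T_{u\to v}$ is the single vertex $u$, then $\alpha_{u\to v}=0$, witnessed by the empty cover, so the formula still holds. The only point requiring care is the observation that no edge of $T$ other than $\{u,v\}$ joins the two components. This is exactly what the tree structure guarantees.
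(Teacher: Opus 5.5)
Your proof is correct and follows the same route as the paper. You apply Lemma~\ref{lem_criterion} at $t=1$ to reduce to minimizing $|\mathbf{a}|$ subject to $a_u=a_v=0$ with every other edge covered, then split along the two components $T_{u\to v}$ and $T_{v\to u}$, writing out explicitly both inequalities (via the support of $\mathbf{a}$, and via the indicator vector of $C_1\cup C_2$) that the paper compresses into a single sentence.
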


\begin{proof} 
Let $P = (x_u, x_v)$ be the corresponding prime ideal. By Lemma~\ref{lem_criterion}, $f_{1,uv}$ is the minimum weight of a $P$-almost $1$-cover $\mathbf{a}$ of $T$. In particular, $a_u = 0$ and $a_v = 0$, and $\mathbf{a}$ covers all other edges of $T$. Thus, $\mathbf{a}$ must cover $T_{u \to v}$ and $T_{v \to u}$ independently with $u, v \notin \text{supp}(\mathbf{a})$. The conclusion follows.
\end{proof}

\begin{lem}\label{lem_5} 
Let $T$ be a tree and let $\{u,v\}$ be an edge of $T$. Then 
\[
\gamma_{1,uv} = \min \{\alpha_{u \to v} + \beta_{v \to u}, \beta_{u \to v} + \alpha_{v \to u}\}.
\]
\end{lem}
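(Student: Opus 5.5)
We prove the two inequalities separately, working with $0/1$ covers, since $T$ is a graph and minimum-weight covers can be taken to be indicator vectors of vertex sets. Let $P=(x_u,x_v)$. By definition, $\gamma_{1,uv}$ is the minimum weight of a vector $\mathbf{a}\in\mathbb{N}^n$ with $a_u+a_v=1$ and $a_x+a_y\ge 1$ for every other edge $\{x,y\}$ of $T$. The condition $a_u+a_v=1$ forces exactly one of $a_u,a_v$ to equal $1$ and the other to equal $0$. We split into these two cases.

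\emph{Case $a_u=0$, $a_v=1$.} Removing the edge $\{u,v\}$ leaves $T_{u\to v}$ and $T_{v\to u}$, and every edge other than $\{u,v\}$ lies in exactly one of them. So the restriction of $\mathbf{a}$ to $V(T_{u\to v})$ is a $1$-cover of $T_{u\to v}$ vanishing at $u$. Its restriction to $V(T_{v\to u})$ is a $1$-cover of $T_{v\to u}$ with value $1$ at $v$.

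We reduce each restricted cover to a $0/1$ vector. Replacing every positive entry by $1$ keeps the covering conditions and does not increase the weight. It also keeps $a_u=0$ and $a_v=1$. The support is then a vertex cover $C_1$ of $T_{u\to v}$ with $u\notin C_1$, and a vertex cover $C_2$ of $T_{v\to u}$ with $v\in C_2$. Hence $|\mathbf{a}|\ge \alpha_{u\to v}+\beta_{v\to u}$. The other case gives, symmetrically, $|\mathbf{a}|\ge \beta_{u\to v}+\alpha_{v\to u}$. This proves $\gamma_{1,uv}\ge\min\{\alpha_{u\to v}+\beta_{v\to u},\,\beta_{u\to v}+\alpha_{v\to u}\}$.

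For the reverse inequality, take $C_1$ attaining $\alpha_{u\to v}$ and $C_2$ attaining $\beta_{v\to u}$, and let $\mathbf{a}$ be the indicator vector of $C_1\cup C_2$. The vertex sets of the two subtrees are disjoint, so $|\mathbf{a}|=\alpha_{u\to v}+\beta_{v\to u}$. Every edge other than $\{u,v\}$ lies in one of the subtrees and is therefore covered. Since $u\notin C_1$ and $v\in C_2$, we get $a_u+a_v=1$. Thus $\mathbf{a}$ is a $P$-tight $1$-cover, and $\gamma_{1,uv}\le \alpha_{u\to v}+\beta_{v\to u}$. The symmetric construction gives the bound with $\beta_{u\to v}+\alpha_{v\to u}$.

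The only point needing care is the disjoint decomposition of the edge set of $T$ into $\{u,v\}$ together with the edges of $T_{u\to v}$ and $T_{v\to u}$. This holds because $T$ is a tree, exactly as used in Lemma~\ref{lem_4}. The truncation to $0/1$ vectors is routine, and the argument mirrors the proof of Lemma~\ref{lem_4}.
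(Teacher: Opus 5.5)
Your proof is correct and follows the paper's argument essentially verbatim. You split on which of $a_u,a_v$ equals $1$, restrict to the subtrees $T_{u\to v}$ and $T_{v\to u}$ for the lower bound, and glue optimal covers for the upper bound. Your explicit truncation of positive entries to $1$ spells out a step the paper leaves implicit when it says $\mathbf{a}$ ``restricts to a vertex cover.''
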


\begin{proof} 
By definition, $\gamma_{1,uv}$ is the minimum weight of a $P$-tight $1$-cover of $T$, where $P = (x_u, x_v)$. Let $\mathbf{a}$ be a $P$-tight $1$-cover of $T$. Then $a_u + a_v = 1$, so either $a_u = 1$ and $a_v = 0$, or $a_u = 0$ and $a_v = 1$. 

In the first case, $\mathbf{a}$ restricts to a vertex cover $C$ of $T_{u \to v}$ with $u \in C$ and a vertex cover $D$ of $T_{v \to u}$ with $v \notin D$. Thus, $|\mathbf{a}| \ge \beta_{u \to v} + \alpha_{v \to u}$. In the second case, an analogous argument yields $|\mathbf{a}| \ge \alpha_{u \to v} + \beta_{v \to u}$. Consequently, the left-hand side is at least the right-hand side.

Conversely, the union of a vertex cover $C$ of $T_{u \to v}$ with $u \in C$ and a vertex cover $D$ of $T_{v \to u}$ with $v \notin D$ (or vice versa) forms a valid $P$-tight $1$-cover of $T$. Therefore, the reverse inequality holds, and equality follows.
\end{proof}

\begin{lem}\label{lem_6} 
Let $T$ be a tree and let $\{u,v\}$ be an edge of $T$. Then 
\[
\tau(T) = \min \{\alpha_{u \to v} + \beta_{v \to u}, \beta_{u \to v} + \alpha_{v \to u}, \beta_{u \to v} + \beta_{v \to u}\}.
\]
\end{lem}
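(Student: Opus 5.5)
We split an arbitrary minimum vertex cover of $T$ according to which endpoints of the edge $\{u,v\}$ it contains. Let $C$ be a vertex cover of $T$. Since $\{u,v\}$ is an edge, $C$ contains at least one of $u$ and $v$, so exactly one of three cases occurs: $u\in C$ and $v\notin C$; $u\notin C$ and $v\in C$; or $u,v\in C$.

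Removing the edge $\{u,v\}$ leaves exactly the two components $T_{u\to v}$ and $T_{v\to u}$. Hence every edge of $T$ other than $\{u,v\}$ lies in one of these two subtrees. Consequently, $C$ restricts to a vertex cover $C\cap V(T_{u\to v})$ of $T_{u\to v}$ and a vertex cover $C\cap V(T_{v\to u})$ of $T_{v\to u}$. Because the vertex sets of the two subtrees partition $V(T)$, we have
\[
|C| = |C\cap V(T_{u\to v})| + |C\cap V(T_{v\to u})|.
\]
Reading off the membership of $u$ and $v$ in each case, and using the definitions of $\alpha$ and $\beta$, we obtain the lower bounds
\[
|C|\ge \beta_{u\to v}+\alpha_{v\to u},\qquad |C|\ge \alpha_{u\to v}+\beta_{v\to u},\qquad |C|\ge \beta_{u\to v}+\beta_{v\to u}
\]
in the three cases respectively. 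Applying this to a minimum vertex cover shows that $\tau(T)$ is at least the minimum on the right-hand side of the claimed identity.

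For the reverse inequality, take $C_1$ realizing the relevant $\alpha$ or $\beta$ for $T_{u\to v}$ and $D_1$ realizing the relevant $\alpha$ or $\beta$ for $T_{v\to u}$, and form $C_1\cup D_1$. This set covers every edge of both subtrees. It also covers the edge $\{u,v\}$, because in each of the three combinations at least one of $u$, $v$ is included. Its size is exactly the corresponding sum, since the two vertex sets are disjoint. Hence $\tau(T)$ is at most each of the three sums, and equality follows. The combination $\alpha_{u\to v}+\alpha_{v\to u}$ is excluded precisely because it would leave $\{u,v\}$ uncovered.

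The only point needing care is that every edge other than $\{u,v\}$ lies inside one of the two subtrees, with no edges between them. This holds because $T$ is a tree, so removing $\{u,v\}$ disconnects it into exactly those two components. The rest of the argument is routine bookkeeping.
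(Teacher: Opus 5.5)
Your proof is correct and follows essentially the same route as the paper: a three-case split according to whether $u$, $v$, or both lie in a minimum cover, with each case restricted to the two subtrees $T_{u\to v}$ and $T_{v\to u}$. You also spell out the reverse inequality, gluing optimal subtree covers and checking that $\{u,v\}$ is covered, which the paper leaves implicit in its final sentence.
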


\begin{proof} 
Let $\mathbf{a}$ be a minimal $1$-cover of $T$. Since $\{u,v\}$ is an edge of $T$, we must have $a_u + a_v \ge 1$. This yields three disjoint possibilities for the pair $(a_u, a_v)$:
\begin{enumerate}
    \item $a_u \ge 1$ and $a_v = 0$: In this case, $\mathbf{a}$ restricts to a vertex cover of $T_{u \to v}$ containing $u$ and a vertex cover of $T_{v \to u}$ not containing $v$. Thus, $|\mathbf{a}| \ge \beta_{u \to v} + \alpha_{v \to u}$.
    \item $a_u = 0$ and $a_v \ge 1$: Analogously, $|\mathbf{a}| \ge \alpha_{u \to v} + \beta_{v \to u}$.
    \item $a_u \ge 1$ and $a_v \ge 1$: Here, $\mathbf{a}$ restricts to vertex covers containing $u$ and $v$ in $T_{u \to v}$ and $T_{v \to u}$, respectively, giving $|\mathbf{a}| \ge \beta_{u \to v} + \beta_{v \to u}$.
\end{enumerate}
Taking the minimum over all three valid configurations yields the desired equality.
\end{proof}

\begin{lem}\label{lem_7} 
Let $T$ be a tree and let $\{u,x\}$ and $\{u,y\}$ be edges of $T$. Then 
\[
f_{1,ux} - f_{1,uy} = d_{y \to u} - d_{x \to u}.
\]
\end{lem}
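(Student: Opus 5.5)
By Lemma~\ref{lem_4}, $f_{1,ux} = \alpha_{u\to x} + \alpha_{x\to u}$ and $f_{1,uy} = \alpha_{u\to y} + \alpha_{y\to u}$. So the plan is to rewrite each $\alpha_{u\to \cdot}$ through the neighbours of $u$, using the recursion from the proof of Lemma~\ref{lem:3}, and then cancel the common terms.

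In the proof of Lemma~\ref{lem:3} we showed that $\alpha_{u\to v} = \sum_{w\in N_T(u)\setminus\{v\}} \beta_{w\to u}$ for any edge $\{u,v\}$. This rests on Lemma~\ref{lem:2}: a cover of $T_{u\to v}$ avoiding $u$ must contain every neighbour $w\ne v$ of $u$, and it decomposes as independent covers of the subtrees $T_{w\to u}$. Set $B = \sum_{w\in N_T(u)} \beta_{w\to u}$. Applying the identity with $v=x$ and with $v=y$ gives
\[
\alpha_{u\to x} = B - \beta_{x\to u}, \qquad \alpha_{u\to y} = B - \beta_{y\to u}.
\]

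Substituting these into the two expressions for the $f$'s, the term $B$ cancels:
\[
f_{1,ux} - f_{1,uy} = (\alpha_{x\to u} - \beta_{x\to u}) - (\alpha_{y\to u} - \beta_{y\to u}) = -d_{x\to u} + d_{y\to u},
\]
which is the claim.

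The only point needing care is that the subtree $T_{w\to u}$ depends only on the directed edge $w\to u$, so the same $\beta_{w\to u}$ appears in both sums regardless of whether we removed $x$ or $y$. This holds by construction: $T_{w\to u}$ is the component containing $w$ after deleting the edge $\{u,w\}$. The identity for $\alpha$ also remains valid when $u$ is a leaf of $T_{u\to v}$, since the sum is then empty and $\alpha_{u\to v}=0$; this case does not arise here anyway, because $u$ has both neighbours $x$ and $y$. There is no genuine obstacle; the argument is bookkeeping.
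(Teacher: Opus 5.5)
Your proposal is correct and follows essentially the same route as the paper: Lemma~\ref{lem_4} combined with the identity $\alpha_{u\to v}=\sum_{w\in N_T(u)\setminus\{v\}}\beta_{w\to u}$ from the proof of Lemma~\ref{lem:3}, followed by cancellation. The only difference is cosmetic: you cancel the full neighbourhood sum $B$, while the paper computes $\alpha_{u\to x}-\alpha_{u\to y}=\beta_{y\to u}-\beta_{x\to u}$ directly.
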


\begin{proof} 
As established in the proof of Lemma~\ref{lem:3}, for any neighbor $w \in N_T(u)$, we have
\[
\alpha_{u \to w} = \sum_{z \in N_T(u) \setminus \{w\}} \beta_{z \to u}.
\]
Applying this to $w = x$ and $w = y$, we obtain
\[
\alpha_{u \to x} - \alpha_{u \to y} = \sum_{z \in N_T(u) \setminus \{x\}} \beta_{z \to u} - \sum_{z \in N_T(u) \setminus \{y\}} \beta_{z \to u} = \beta_{y \to u} - \beta_{x \to u}.
\]
By Lemma~\ref{lem_4}, $f_{1,ux} = \alpha_{u \to x} + \alpha_{x \to u}$ and $f_{1,uy} = \alpha_{u \to y} + \alpha_{y \to u}$. Therefore,
\begin{align*}
f_{1,ux} - f_{1,uy} &= (\alpha_{u \to x} + \alpha_{x \to u}) - (\alpha_{u \to y} + \alpha_{y \to u}) \\
&= (\alpha_{u \to x} - \alpha_{u \to y}) + \alpha_{x \to u} - \alpha_{y \to u} \\
&= (\beta_{y \to u} - \beta_{x \to u}) + \alpha_{x \to u} - \alpha_{y \to u} \\
&= (\beta_{y \to u} - \alpha_{y \to u}) - (\beta_{x \to u} - \alpha_{x \to u}) \\
&= d_{y \to u} - d_{x \to u}.
\end{align*}
The conclusion follows.
\end{proof}

\begin{proof}[Proof of Theorem \ref{thm_linear_trees}] 
By \cite[Theorem 1.3]{V}, for each edge $\{u,v\}$ with corresponding prime ideal $P = (x_u, x_v)$, the local $v$-number is 
\[
v_P(J(T)^t) = f_{1,uv} + (t-1) \gamma_{1,uv}.
\]

Let $\{u,v\}$ be an edge of $T$ that minimizes $f_{1,uv}$. To establish linearity, it suffices to prove that $\gamma_{1,uv} = \tau(T)$. We first claim that $d_{u \to v}$ and $d_{v \to u}$ cannot both be negative. Assume for contradiction that $d_{u \to v} < 0$ and $d_{v \to u} < 0$. 

By Lemma~\ref{lem:3}, we have 
\[
d_{u \to v} = 1 - \sum_{w \in N_T(u) \setminus \{v\}} \max \{ 0, d_{w \to u} \}.
\]
In particular, $d_{x \to y} \le 1$ for all edges $\{x,y\}$ in $T$. Furthermore, since $d_{u \to v} < 0$, there must exist some $w \in N_T(u) \setminus \{v\}$ such that $d_{w \to u} = 1$. 

By Lemma~\ref{lem_7}, we then have 
\[
f_{1,uw} - f_{1,uv} = d_{v \to u} - d_{w \to u} = d_{v \to u} - 1 < 0,
\]
which yields $f_{1,uw} < f_{1,uv}$. This contradicts the minimality of $f_{1,uv}$. Thus, at least one of $d_{u \to v}$ or $d_{v \to u}$ is nonnegative.

Now, Lemma~\ref{lem_5} yields 
\begin{align*}
\gamma_{1,uv} &= \min \{ \alpha_{u \to v} + \beta_{v \to u}, \beta_{u \to v} + \alpha_{v \to u} \} \\
&= f_{1,uv} + \min \{ d_{u \to v}, d_{v \to u} \},
\end{align*}
and Lemma~\ref{lem_6} yields 
\begin{align*}
\tau(T) &= \min \{ \alpha_{u \to v} + \beta_{v \to u}, \beta_{u \to v} + \alpha_{v \to u}, \beta_{u \to v} + \beta_{v \to u} \} \\
&= f_{1,uv} + \min \{ d_{u \to v}, d_{v \to u}, d_{u \to v} + d_{v \to u} \}.
\end{align*}
Since $\max \{ d_{u \to v}, d_{v \to u} \} \ge 0$, we have $d_{u \to v} + d_{v \to u} \ge \min \{ d_{u \to v}, d_{v \to u} \}$, which implies 
\[
\min \{ d_{u \to v}, d_{v \to u}, d_{u \to v} + d_{v \to u} \} = \min \{ d_{u \to v}, d_{v \to u} \}.
\]
Consequently, $\gamma_{1,uv} = \tau(T)$, completing the proof.
\end{proof}

\vspace{0.2cm}
\noindent {\bf Data Availability} Data sharing is not applicable to this article as no datasets were generated or analyzed during the current study.
\vspace{0.2cm}

\noindent {\bf Conflict of interest} There are no competing interests of either financial or personal nature.

\end{document}